\documentclass{amsart}
\usepackage{amssymb}
\usepackage{amsthm}
\oddsidemargin=-5mm\evensidemargin=-5mm \textwidth=17cm
\newtheorem{theor}{Theorem}[section] 
\newtheorem{prop}{Proposition}
\theoremstyle{definition} \newtheorem{defin}{Definition}[section]
\newtheorem{ex}{Example}[section]
\theoremstyle{remark} \newtheorem{rem}{Remark}[section]
\newcommand{\pn}{\par\noindent} \newcommand{\pmn}{\par\medskip\noindent}

\begin{document}
\title{Polygons in three-dimensional space}
\author{Yury Kochetkov}
\date{}
\begin{abstract} Let $P=A_1\ldots A_n$ be a generic polygon in
three-dimensional space and let $v_1,v_2,\ldots,v_n$ be vectors
$\overline{A_1A_2},\overline{A_2A_3},\ldots,\overline{A_nA_1}$,
respectively. $P$ will be called \emph{regular}, if there exist
vectors $u_1,\ldots,u_n$ such that cross products
$[u_1,u_2],[u_2,u_3],\ldots,[u_n,u_1]$ are equal to vectors
$v_2,v_3,\ldots,v_1$, respectively. In this case the polygon $P'$,
defined be vectors $u_2-u_1,u_3-u_2,\ldots,u_1-u_n$ will be called
the \emph{derived polygon} or the \emph{derivative} of the polygon
$P$. In this work we formulate conditions for regularity and
discuss geometric properties of derived polygons for $n=4,5,6$.
\end{abstract}
\email{yukochetkov@hse.ru, yuyukochetkov@gmail.com}
\maketitle

\section{Introduction}
\pn In this work we consider duality problems in the
three-dimensional space. The duality for plane polygons is
discussed in work \cite{CN} (in particular, the duality of plane
quadrangles is the subject of work \cite{BK}). However, the use of
complex numbers is the main tool in this approach. In three
dimensional space the natural "multiplication"{} is the cross
product. So we try to construct dual objects for space polygons
using cross product, as main tool. Spaces of three-dimensional
polygons are considered in \cite{HK} from points of view of
differential and algebraic geometry. However, in this work we use
only elementary properties of cross and dot products in
three-dimensional space, so it can be understand by
undergraduates. The standard reference here is \cite{SL}. \pmn We
work in the standard space $\mathbb{R}^3$. In what follows $(a,b)$
will be the dot product of vectors $a$ and $b$, and $[a,b]$ will
be the cross product. Let us remind that
$$(a,[b,c])=\begin{vmatrix}\,a_1&a_2&a_3\\\,b_1&b_2&b_3\\\,c_1&c_2&c_3
\end{vmatrix}$$ for any three vectors $a=(a_1,a_2,a_3)$, $b=(b_1,b_2,b_3)$
and $c=(c_1,c_2,c_3)$. \pmn
Let $P=A_1,\ldots,A_n$ be a generic polygon in three-dimensional space, i.e.
any pair of its consecutive edges are not collinear and any triple of its
consecutive edges are not coplanar. Let $v_1=\overline{A_1A_2},
v_2= \overline{A_2A_3},\ldots,v_n=\overline{A_nA_1}$. We want to
construct a system
of vectors $u_1,\ldots,u_n$ such, that
$$[u_1,u_2]=v_2,\,\,[u_2,u_3]=v_3,\,\ldots,\,[u_n,u_1]=v_1.$$
\begin{defin} A generic polygon $P$ is called \emph{regular}, if such system
exists and the system itself will be called a \emph{support
system} for the polygon $P$. If $P$ is regular and
$u_1=\overline{OB_1},\ldots, u_n=\overline{OB_n}$ --- its support
system, then the polygon $P'=B_1\ldots B_n$ will be called the
\emph{derived polygon} (or the \emph{derivative}) of $P$.
\end{defin}
\begin{ex} Let $P=A_1A_2A_3A_4$, where $A_1=(0,0,0), A_2=(1,1,2),
A_3=(2,3,1), A_4=(-1,2,-2)$, then $v_1=(1,1,2)$, $v_2=(1,2,-1)$,
$v_3=(-3,-1,-3)$ and $v_4=(1,-2,2)$. As $u_1\bot v_1$ and $u_1\bot
v_2$, then vector $u_1$ is a multiple of the cross product
$[v_1,v_2]$. Analogously, $u_2$ is a multiple of $[v_2,v_3]$,
$u_3$ is a multiple of $[v_3,v_4]$ and $u_4$ is a multiple of
$[v_4,v_1]$.\pmn Let $u_1=[v_1,v_2]=(-5,3,1)$. As
$[u_1,[v_2,v_3]]=(9,18,-9)$, than $u_2=\frac 19\cdot
[v_2,v_3]=\left(-\frac 79,\frac 23,\frac 59\right)$. Analogously,
as $[u_2,[v_3,v_4]]=(3,1,3)$, then $u_3=-[v_3,v_4]=(8,-3,-7)$. And
as $[u_3,[v_4,v_1]]=(-9,18,-18)$, then $u_4=-\frac 19\cdot
[v_4,v_1]= (\frac 23,0,-\frac 13)$. At last we have, that
$$[u_4,u_1]=\left(\left(\frac 23,0,-\frac 13\right),(-5,3,1)\right)=
(1,1,2)$$
--- support system is constructed and our polygon is regular.
\pmn Now let us study the derived polygon $P'=B_1B_2B_3B_4$. As
$$(u_2-u_1,[u_3-u_1,u_4-u_1])=\begin{vmatrix}38/9&-21/9&-4/9\\ 13&-6&-8\\
17/3&-3&-4/3\end{vmatrix}=0,$$ then the polygon $P'$ is a plane
quadrangle. Moreover, as
$$[u_2-u_1,u_3-u_1]+[u_3-u_1,u_4-u_1]=(16,28,5)+(-16,-28,-5)=(0,0,0),$$
then the oriented area of $B_1B_2B_3B_4$ is zero. Thus, $P'$ is
self-intersecting:
\[\begin{picture}(155,80) \put(10,10){\line(1,1){60}}
\put(10,10){\line(1,4){15}} \put(25,70){\line(2,-1){120}}
\put(70,70){\line(5,-4){75}} \put(0,8){\scriptsize $B_1$}
\put(73,71){\scriptsize $B_2$} \put(148,8){\scriptsize $B_3$}
\put(14,71){\scriptsize $B_4$} \put(52,60){\scriptsize A}
\end{picture}\] and $S_{\triangle B_1AB_4}=S_{\triangle B_2B_3A}$.
\end{ex} \pn We will work with determinants
$\Delta_1,\ldots,\Delta_n$,
where $$\Delta_i=(v_i,[v_{i+1},v_{i+2}])=
\begin{vmatrix}x_i&y_i&z_i\\\,x_{i+1}&y_{i+1}&z_{i+1}\\
\,x_{i+2}&y_{i+2}&z_{i+2}\end{vmatrix},\, i=1,\ldots,n-2,$$
$$\Delta_{n-1}=(v_{n-1},[v_n,v_1])=\begin{vmatrix}
\,x_{n-1}&y_{n-1}&z_{n-1}\\x_n&y_n&z_n\\x_1&y_1&z_1\end{vmatrix}
\text{ and } \Delta_n=(v_n,[v_1,v_2])=\begin{vmatrix}\,x_n&y_n&z_n\\
\,x_1&y_1&z_1\\\,x_2&y_2&z_2\end{vmatrix}$$ ($(x_j,y_j,z_j)$ are
coordinates of the vector $v_j$). It must be noted, that as our
polygon is generic, then these determinants are nonzero. \pmn Here
is the summary of obtained results.
\begin{itemize} \item A generic polygon $P=A_1\ldots A_n$ for even $n$
is regular, if $\Delta_1\cdot\Delta_3\cdot\ldots\cdot\Delta_{n-1}=
\Delta_2\cdot\Delta_4\cdot\ldots\cdot\Delta_n$. In this case its
support systems constitute an infinite family. If $n$ is odd, then
$P$ is regular, if
$\Delta_1\cdot\Delta_2\cdot\ldots\cdot\Delta_n>0$, then its
support system is unique up to the sign (Theorem 2.1). \item If
$n=4$ then $P$ is always regular. Each its derivative is a plane
self-intersecting quadrangle with oriented area $0$ (Theorem 3.1).
\item If $n=5$, then the derivative of a regular polygon is a
plane pentagon with oriented area $0$ (Theorem 4.1). \item A
regular hexagon is called \emph{strongly-regular}, if $\Delta_1=
\Delta_4$, $\Delta_2=\Delta_5$ and $\Delta_3=\Delta_6$. The
\emph{type} of a strongly-regular hexagon is the cyclic ratio
$\Delta_1:\Delta_2:\Delta_3$. All derivatives of a regular hexagon
$P$ are strongly-regular and of the same type. The types of
derived hexagon $P'$ and its derivative $P''$ are the same. If
$P'=B_1B_2B_3B_4B_5B_6$, then vertices of $P'$ belong to two
parallel planes $\Pi_1$ and $\Pi_2$: points $B_1$, $B_3$ and $B_5$
belong to $\Pi_1$, and points $B_2$, $B_4$ and $B_6$ belong to
$\Pi_2$. Moreover, the oriented area of the plane hexagon
$B_1B'_2B_3B'_4B_5B'_6$ is $0$ (here points $B'_2$, $B'_4$ and
$B'_6$ are projections of points $B_2,B_4,B_4$ to the plane
$\Pi_1$). (Theorems 5.1, 5.2 and 5.3). \end{itemize} In the last
section the question of the regularity of a knotted $6$-gon is
discussed.

\section{General remarks}
\pn We use notations from the previous section. \pmn
\begin{theor} Let $P$ be a generic $n$-gon in three-dimensional space. If
$n=2m$ is even, then $P$ is regular if and only if
$$\Delta_1\cdot\Delta_3\cdot\ldots\cdot\Delta_{2m-1}=
\Delta_2\cdot\Delta_4\cdot\ldots\cdot\Delta_{2m}.\eqno(1)$$ In this case
there is an infinite family of its support systems. If $n$ is odd then $P$
is regular if and only if $\Delta_1\cdot\Delta_2\cdot\ldots\cdot\Delta_n>0$.
In this case its support system is unique up to the sign. \end{theor}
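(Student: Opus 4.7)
The plan is to first exploit the orthogonality forced by the cross product. For any support system $(u_1,\ldots,u_n)$, the equations $[u_{i-1},u_i]=v_i$ and $[u_i,u_{i+1}]=v_{i+1}$ together imply $u_i\perp v_i$ and $u_i\perp v_{i+1}$; since $P$ is generic, $v_i$ and $v_{i+1}$ are linearly independent, so $u_i$ must be a scalar multiple of $[v_i,v_{i+1}]$. Writing $u_i=\lambda_i[v_i,v_{i+1}]$ reduces the construction of a support system to the determination of $n$ nonzero real scalars.

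Substituting this ansatz into $[u_i,u_{i+1}]=v_{i+1}$ and expanding via the identity $[a,[b,c]]=(a,c)b-(a,b)c$, I would use that $([v_i,v_{i+1}],v_{i+1})$ vanishes by orthogonality while $([v_i,v_{i+1}],v_{i+2})=\Delta_i$ by the cyclic symmetry of the scalar triple product. The vector system then collapses to the cyclic scalar system
$$\lambda_i\lambda_{i+1}\Delta_i=1,\qquad i=1,\ldots,n,$$
with indices read modulo $n$. Genericity of $P$ guarantees $\Delta_i\ne 0$, so nonzero real $\lambda_i$'s are the only candidates.

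To analyze this cyclic system I would multiply its equations in two natural ways. Multiplying all $n$ equations gives $(\lambda_1\cdots\lambda_n)^2\cdot\Delta_1\cdots\Delta_n=1$, which already forces $\Delta_1\cdots\Delta_n>0$ as a necessary condition. For even $n=2m$, the product of the odd-indexed equations gives $\lambda_1\cdots\lambda_n\cdot\Delta_1\Delta_3\cdots\Delta_{2m-1}=1$ and the product of the even-indexed ones gives $\lambda_1\cdots\lambda_n\cdot\Delta_2\Delta_4\cdots\Delta_{2m}=1$; equating these yields condition $(1)$. Conversely, under $(1)$ the recursion $\lambda_{i+1}=1/(\Delta_i\lambda_i)$ started from any nonzero $\lambda_1$ automatically closes up consistently at step $n+1$, producing a one-parameter family of support systems. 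For odd $n$, iterating the same recursion expresses each $\lambda_i$ as $\lambda_1^{\pm 1}$ times a monomial in the $\Delta_j$, and closing the cycle produces a single equation $\lambda_1^2=Q$ with $Q$ a rational expression whose sign coincides with that of $\Delta_1\cdots\Delta_n$; hence the system has real solutions if and only if $\Delta_1\cdots\Delta_n>0$, and then $\lambda_1$, and with it the whole tuple, is determined up to an overall sign.

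The main obstacle is purely notational: tracking the alternating-exponent pattern that arises when iterating $\lambda_{i+1}=1/(\Delta_i\lambda_i)$ cleanly enough to contrast the two cases (a homogeneous identity on the $\Delta_i$ together with a one-parameter family when $n$ is even; a positivity condition on the full product together with a sign-unique solution when $n$ is odd). The parity-grouped products above are intended precisely to sidestep this inductive bookkeeping for the even case, while the odd case still needs one explicit round of the recursion to read off the sign of $Q$.
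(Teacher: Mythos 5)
Your proposal is correct and follows essentially the same route as the paper: both reduce, via the orthogonality argument, to the ansatz $u_i=\lambda_i[v_i,v_{i+1}]$ and the identity $[[v_i,v_{i+1}],[v_{i+1},v_{i+2}]]=\Delta_i v_{i+1}$, yielding the cyclic system $\lambda_i\lambda_{i+1}\Delta_i=1$, whose closure condition gives (1) with a one-parameter family for even $n$ and the sign condition with uniqueness up to sign for odd $n$. Your parity-grouped products are a slightly tidier way to extract necessity than the paper's explicit recursion plus the rescaling parameter $\alpha$, but the substance is identical.
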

\begin{proof} To prove the regularity we must construct a support
system. As $[u_{i-1},u_i]=v_i$ and $[u_i,u_{i+1}]=v_{i+1}$, then
$u_i$ is orthogonal to $v_i$ and $v_{i+1}$, hence, it is proportional to
their cross product: $u_i=\alpha [v_i,v_{i+1}]$. \pmn Let $u_1=[v_1,v_2]$.
As
$$[[v_i,v_{i+1}],[v_{i+1},v_{i+2}]]=\Delta_i\cdot v_{i+1}, \eqno(2)$$
then $u_2$ must be equal to $[v_2,v_3]/\Delta_1$. Indeed,
$$[u_1,u_2]=\left[[v_1,v_2],\dfrac{1}{\Delta_1}\cdot [v_2,v_3]\right]=
\dfrac{1}{\Delta_1}\cdot [[v_1,v_2],[v_2,v_3]]=v_2.$$ Analogously,
$$u_3=\dfrac{\Delta_1\cdot [v_3,v_4]}{\Delta_2},\,\,
u_4=\dfrac{\Delta_2\cdot
[v_4,v_5]}{\Delta_1\cdot\Delta_3},\,\,\ldots$$ Let $n=2m$ be even,
then
$$u_{2m}=\dfrac{\Delta_2\cdot\Delta_4\cdot\ldots\cdot\Delta_{2m-2}\cdot
[v_n,v_1]}{\Delta_1\cdot\Delta_3\cdot\ldots\cdot\Delta_{2m-1}}.$$
This choice of vectors $u_1,\ldots,u_n$ allows one to satisfy
conditions $[u_1,u_2]=v_2, \ldots,[u_{n-1},u_n]=v_n$. But the
condition $[u_n,u_1]=v_1$ can be satisfied only when
$$u_1=\dfrac{\Delta_1\cdot\Delta_3\cdot\ldots\cdot\Delta_{2m-1}\cdot
[v_1,v_2]}{\Delta_2\cdot\Delta_4\cdot\ldots\cdot\Delta_{2m}}.$$
But $u_1=[v_1,v_2]$, hence,
$$\Delta_1\cdot\Delta_3\cdot\ldots\cdot\Delta_{n-1}=\Delta_2\cdot
\Delta_4\cdot\ldots\cdot\Delta_n.$$ \pmn Let $n=2m$ and
$u_1,\ldots,u_n$ be a constructed above support system. Then
vectors $u'_1,\ldots,u'_n$, where $u'_i=\alpha u_i$ for even $i$,
and $u'_i=u_i/\alpha$ for odd $i$ also constitute a support system
for all nonzero $\alpha$. Hence, there is an infinite family of
support systems (and of derived polygons) for a regular $2m$-gon.
\pmn Let now $n$ be odd. We construct vectors $u_1,\ldots,u_n$, as
above. The key moment is the satisfaction of the condition
$$[u_n,u_1]=v_1\Leftrightarrow \dfrac{\Delta_1\cdot\Delta_3\cdot\ldots
\cdot\Delta_{n-2}\cdot
[[v_n,v_1],[v_1,v_2]]}{\Delta_2\cdot\Delta_4\cdot
\ldots\cdot\Delta_{n-1}}=v_1.$$ Now (2) gives us a sufficient
condition: $P$ is regular if
$\Delta_1\cdot\Delta_3\cdot\ldots\cdot\Delta_n=
\Delta_2\cdot\Delta_4\cdot\ldots\cdot\Delta_{n-1}$. However, in
the odd case this condition is not necessary. \pmn As above we
define vectors $u'_i$: $u'_i=\alpha\cdot u_i$, if $i$ is even, and
$u'_i=u_i/\alpha$, if $i$ is odd. Then $[u'_i,u'_{i+1}]=v_{i+1}$
for $i=1,\ldots,n-1$. But
$$[u'_n,u'_1]=\dfrac{\Delta_1\cdot\Delta_3\cdot\ldots\cdot\Delta_n}
{\alpha^2\cdot\Delta_2\cdot\Delta_4\cdot\ldots\cdot\Delta_{n-1}}\cdot
v_1.$$ Thus, if $\Delta_1\cdot\Delta_2\cdot\ldots\cdot\Delta_n>0$, then
there exists the unique (up to the sign) number $\alpha$ such, that
vectors $u'_1,\ldots,u'_n$ constitute a support system of $P$.
\end{proof}  \begin{rem} Let $n$ be odd. If an $n$-gon $P$ is not regular,
then its mirror-symmetric is regular. \end{rem} \begin{ex} Let us
consider a pentagon $P$:
$$\begin{array}{llll} v_1=(1,0,0)&\Delta_1=1&[v_1,v_2]=(0,0,1)&
u_1=(0,0,1)\\ v_2=(0,1,0)&\Delta_2=2&[v_2,v_3]=(1,0,0)& u_2=(1,0,0)\\
v_3=(0,0,1)&\Delta_3=-4&[v_3,v_4]=[2,2,0]&u_3=(1,1,0)\\
v_4=(2,-2,3)& \Delta_4=5&[v_4,v_5]=(5,-1,-4)&u_4=(-5/2,1/2,2)\\
v_5=(-3,1,-4)&
\Delta_5=-4&[v_5,v_1]=(0,-4,-1)&u_5=(0,8/5,2/5)\end{array}$$ As
$\Delta_1\cdot\Delta_2\cdot\Delta_3\cdot\Delta_4\cdot\Delta_5>0$, then
$P$ is regular, and as $[u_5,u_1]=(8/5,0,0)$, then $\alpha=2\sqrt 2/\sqrt 5$.
Thus, we have the following support system:
\begin{gather*}u'_1=\left(0,\,0,\,\dfrac{\sqrt 5}{2\sqrt 2}\right),\,
u'_2=\left(\dfrac{2\sqrt 2}{\sqrt 5},\,0,\,0\right),\,
u'_3=\left(\dfrac{\sqrt 5}{2\sqrt 2},\,\dfrac{\sqrt 5}{2\sqrt
2},\,0\right), \\ \\ u'_4=\left(-\sqrt{10},\,\dfrac{\sqrt 2}{\sqrt
5},\,\dfrac{4\sqrt 2}{\sqrt 5}\right),\,
u'_5=\left(0,\,\dfrac{2\sqrt 2}{\sqrt 5},\,\dfrac{1}{\sqrt
10}\right).\end{gather*} \begin{rem} Let $Q$ be an $n$-gon defined by
vectors $v_1,\ldots,v_n$, $v_1+\ldots+v_n=0$. $Q$ is a derived polygon
of some polygon $P$ if there exists a vector $u$ such, that
$$[u,u+v_1]+[u+v_1,u+v_1+v_2]+\ldots+[u+v_1+\ldots+v_{n-2},u+v_1+\ldots+
v_{n-1}]+[u+v_1+\ldots+v_{n-1},u]=0.$$ The lefthand part of this condition
is equal to $\sum_{0<i<j<n} [v_i,v_j]$ and does not depend on $u$ at all.
\end{rem}
\end{ex}

\section{Quadrangles}
\pn \begin{theor} Each quadrangle is regular and each its derivative is
a plane quadrangle with oriented area $0$. \end{theor}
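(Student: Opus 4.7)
The plan is to use the key identity that $v_1+v_2+v_3+v_4=0$ (the polygon is closed) twice: once to verify the regularity condition of Theorem 2.1, and once to show that the two triangles composing the derivative have opposite vector areas.

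First I would verify regularity. By Theorem 2.1, since $n=4$ is even, $P$ is regular iff $\Delta_1\Delta_3=\Delta_2\Delta_4$. Using $v_4=-(v_1+v_2+v_3)$, I would expand each $\Delta_i$ via the triple product and eliminate terms of the form $(v_j,[v_j,\cdot])$ or $(v_j,[\cdot,v_j])$. A short computation (relying on the cyclic symmetry $(a,[b,c])=(b,[c,a])=(c,[a,b])$) gives the chain $\Delta_3=\Delta_1$ and $\Delta_2=\Delta_4=-\Delta_1$, so both products equal $\Delta_1^2$. Hence condition $(1)$ holds automatically, and every generic quadrangle is regular. (This also explains the remark, from Theorem 2.1, that there is a one-parameter family of support systems.)

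Second, for any support system $u_1,u_2,u_3,u_4$ of $P$, I would compute directly
\begin{gather*}
[u_2-u_1,u_3-u_1]+[u_3-u_1,u_4-u_1] \\
=[u_1,u_2]+[u_2,u_3]+[u_3,u_1]+[u_3,u_4]+[u_1,u_3]+[u_4,u_1] \\
=[u_1,u_2]+[u_2,u_3]+[u_3,u_4]+[u_4,u_1]=v_2+v_3+v_4+v_1=0,
\end{gather*}
where the middle $[u_3,u_1]+[u_1,u_3]$ cancels, the defining relations of the support system are applied, and the final equality uses the closure of $P$.

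The last step is the geometric interpretation. The vector $[u_2-u_1,u_3-u_1]$ is (twice) the oriented area vector of triangle $B_1B_2B_3$, and $[u_3-u_1,u_4-u_1]$ is that of $B_1B_3B_4$; these are normals to the two planes spanned by the respective triples. Their sum being zero means the two normals are anti-parallel, so the two planes coincide (both contain the line $B_1B_3$), hence $P'$ is planar. In that common plane the two triangle areas are opposite in sign, so the oriented area of $B_1B_2B_3B_4$ vanishes. There is no real obstacle here; the whole argument rests on two parallel uses of the closure identity $\sum v_i=0$, and the main thing to get right is bookkeeping in the expansion of $\Delta_2,\Delta_3,\Delta_4$ in terms of $\Delta_1$.
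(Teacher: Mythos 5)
Your proof is correct, and while the first half coincides with the paper's argument, the second half takes a genuinely different route. For regularity you do exactly what the paper does: substitute $v_4=-(v_1+v_2+v_3)$ into the triple products to get $\Delta_2=\Delta_4=-\Delta_1$, $\Delta_3=\Delta_1$, so condition (1) of Theorem 2.1 holds identically. For the second half, the paper proves planarity by expanding the mixed product $(u'_2-u'_1,[u'_3-u'_1,u'_4-u'_1])$ using the explicit form of the support system ($u_1=[v_1,v_2]$, $u_2=[v_2,v_3]/\Delta_1$, \dots) together with the relations among the $\Delta_i$, and only afterwards gets the zero-area statement from $\sum_i[u'_i,u'_{i+1}]=v_1+v_2+v_3+v_4=0$. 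You instead note that the vector area $[u_2-u_1,u_3-u_1]+[u_3-u_1,u_4-u_1]$ telescopes to that same sum and hence vanishes, and you extract \emph{both} conclusions from this single identity: the two triangle normals are negatives of one another and the triangles share the diagonal $B_1B_3$, so their planes coincide and their oriented areas cancel. This is shorter, works for an arbitrary support system without computing it, and avoids the mixed product entirely; the price is that it is special to $n=4$ (a pentagon with zero vector area need not be planar, which is why the paper's mixed-product computation is the one that generalizes to Theorem 4.1). One small point you should add: the phrase ``the two normals are anti-parallel'' presupposes they are nonzero. If $[u_2-u_1,u_3-u_1]=0$ then both cross products vanish, the triples $B_1,B_2,B_3$ and $B_1,B_3,B_4$ are each collinear, and since $B_1\neq B_3$ (otherwise $[u_2,u_1]=v_3=-v_2$ would make $v_2,v_3$ collinear, contradicting genericity) all four points lie on the line $B_1B_3$, so the conclusion still holds; a sentence to this effect closes the only gap.
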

\begin{proof} We use notations of the previous section. Let $P$ be a
quadrangle and $v_i=(a_i,b_i,c_i)$, $i=1,2,3,4$. Then
$$\Delta_2=\begin{vmatrix}\,a_2&b_2&c_2\\\,a_3&b_3&c_3\\ \,a_4&b_4&c_4
\end{vmatrix}=\begin{vmatrix} a_2&b_2&c_2\\a_3&b_3&c_3\\ -a_1-a_2-a_3&
-b_1-b_2-b_3&-c_1-c_2-c_3\end{vmatrix}=\begin{vmatrix} a_2&b_2&c_2\\
a_3&b_3&c_3\\-a_1&-b_1&-c_1\end{vmatrix}=-\Delta_1.$$ Analogously,
$\Delta_3=\Delta_1$ and $\Delta_4=-\Delta_1$, i.e. P is regular.
\pmn Let us consider a support system $u'_1,u'_2,u'_3,u'_4$, where
$u'_1=u_1/\alpha$, $u'_2=\alpha\cdot u_2$, $u'_3=u_3/\alpha$,
$u'_4=\alpha\cdot u_4$ and $u_1=[v_1,v_2]$,
$u_2=[v_2,v_3]/\Delta_1$, $u_3=-[v_3,v_4]$,
$u_4=-[v_4,v_1]/\Delta_1$. We must prove that the mixed product of
vectors $u'_2-u'_1$, $u'_3-u'_1$ and $u'_4-u'_1$ is zero. We have
\begin{multline*} (u'_2-u'_1,[u'_3-u'_1,u'_4-u'_1])=
(u'_2-u'_1,[u'_3,u'_4]-[u'_1,u'_4]-[u'_3,u'_1])=
(u'_2,[u'_3,u'_4])-(u'_2,[u'_1,u'_4])-\\-(u'_2,[u'_3,u'_1])-
(u'_1,[u'_3,u'_4])=\alpha\cdot(u_2,v_4)+\alpha\cdot(u_2,v_1)-
(u_1,v_3)/\alpha-(u_1,v_4)/\alpha=\\=\dfrac{\alpha}{\Delta_1}\cdot
\bigl(([v_2,v_3],v_4)+([v_2,v_3],v_1)\bigr)-\frac 1\alpha\cdot
\bigl(([v_1,v_2],v_3)+([v_1,v_2],v_4)\bigr)=\\=
\dfrac{\alpha}{\Delta_1}\cdot(\Delta_2+\Delta_1)-\frac
1\alpha\cdot (\Delta_1+\Delta_4)=0. \end{multline*}  Let
$u'_i=\overline{OB_i}$, $i=1,2,3,4$, and $\Pi$ be the plane of
points $B_1,B_2,B_3,B_4$. In a coordinate system, where $\Pi$ is
parallel to the plane $xy$, the $z$-coordinate of the sum
$[u'_1,u'_2]+[u'_2,u'_3]+[u'_3,u'_4]+[u'_4,u'_1]$ (which is zero)
is the oriented area of the quadrangle $B_1B_2B_3B_4$, multiplied
by $2$. In particular $B_1B_2B_3B_4$ is a self-intersecting
quadrangle.
\end{proof}

\section{Pentagons}
\pn \begin{theor} Let $P$ be a generic regular pentagon. Then its derivative
is a plane pentagon with oriented area zero. \end{theor}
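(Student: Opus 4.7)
The plan is as follows. Use Theorem~2.1 to fix the explicit support system
\begin{gather*}
u_1=[v_1,v_2]/\alpha,\quad u_2=\alpha[v_2,v_3]/\Delta_1,\quad u_3=\Delta_1[v_3,v_4]/(\alpha\Delta_2),\\
u_4=\alpha\Delta_2[v_4,v_5]/(\Delta_1\Delta_3),\quad u_5=\Delta_1\Delta_3[v_5,v_1]/(\alpha\Delta_2\Delta_4),
\end{gather*}
where regularity of $P$ provides a real $\alpha$ satisfying $\alpha^2\Delta_2\Delta_4=\Delta_1\Delta_3\Delta_5$ (this is the closure relation $[u_5,u_1]=v_1$). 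The oriented-area half of the statement is then automatic: $\sum_{i=1}^{5}[u_i,u_{i+1}]=\sum v_{i+1}=0$ because the edges of $P$ close, so the area vector of $P'$ already vanishes; once planarity of $P'$ is established, picking coordinates with its plane parallel to the $xy$-plane identifies the vanishing $z$-component with twice the oriented area, exactly as in the proof of Theorem~3.1.

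The substance is planarity. I would reduce it to a single triple-product identity: show $(u_2-u_1,[u_3-u_1,u_4-u_1])=0$. By the cyclic symmetry of the construction, the same calculation with shifted indices gives coplanarity of $B_2B_3B_4B_5$; the two planes share the three generically non-collinear points $B_2,B_3,B_4$, so they coincide and all five vertices of $P'$ lie in one plane.

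Expanding the triple product by multilinearity, discarding determinants with a repeated row, and using cyclic invariance of $\det$ together with $[u_i,u_{i+1}]=v_{i+1}$, the eight summands collapse to
\[(u_2,v_4)+(u_4,v_2)-(u_1,v_3)-(u_1,v_4).\]
The orthogonalities $u_i\perp v_i,v_{i+1}$ and the closure $v_1+\ldots+v_5=0$ turn $-(u_1,v_3)-(u_1,v_4)$ into $(u_1,v_5)=\Delta_5/\alpha$; writing $v_2=-v_1-v_3-v_4-v_5$ and applying the definitions of $\Delta_3,\Delta_4$ converts the remaining pair into $-\alpha\Delta_2\Delta_4/(\Delta_1\Delta_3)$. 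The total reads $(\Delta_1\Delta_3\Delta_5-\alpha^2\Delta_2\Delta_4)/(\alpha\Delta_1\Delta_3)$, which vanishes exactly by the regularity relation $\alpha^2\Delta_2\Delta_4=\Delta_1\Delta_3\Delta_5$.

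The main obstacle I anticipate is the index-bookkeeping in that last step: ``mixed'' inner products such as $(u_4,v_2)$ do not directly equal any $\Delta_j$, so one first has to use $\sum v_i=0$ to rewrite determinants like $\det[v_4,v_5,v_2]$ as $-\Delta_3-\Delta_4$ before the cancellation forced by the closure condition on $\alpha$ becomes visible.
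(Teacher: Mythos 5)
Your proof is correct, and the overall skeleton (planarity via triple products of the form $(u_{i+1}-u_i,[\,\cdot\,,\cdot\,])$, area via the vanishing of $\sum_i[u_i,u_{i+1}]=\sum_i v_{i+1}$ and the shoelace interpretation of the $z$-component) coincides with the paper's. But the key step is handled by a genuinely different mechanism. The paper never invokes the explicit support system: after reducing to the four terms it uses only the orthogonality relations $(u_i,[u_i,\cdot])=0$ together with the single identity $[u_1,u_2]+[u_2,u_3]+[u_3,u_4]+[u_4,u_5]+[u_5,u_1]=0$ to rewrite $(u_4,[u_2,u_3])+(u_4,[u_1,u_2])$ as $-(u_4,[u_5,u_1])$ and $-(u_1,[u_2,u_3])-(u_1,[u_3,u_4])$ as $(u_1,[u_4,u_5])$, whereupon the two surviving terms are the same determinant with opposite signs. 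You instead substitute the explicit vectors $u_i$ from Theorem 2.1, express everything in the $\Delta_i$, and let the closure condition $\alpha^2\Delta_2\Delta_4=\Delta_1\Delta_3\Delta_5$ do the cancelling; I checked the bookkeeping ($\det(v_4,v_5,v_2)=-\Delta_3-\Delta_4$, etc.) and it is right. The paper's route is shorter, coordinate-free, and works verbatim for \emph{any} support system without knowing its formula; yours is heavier but makes visible exactly where regularity enters, namely that the residual term is $(\Delta_1\Delta_3\Delta_5-\alpha^2\Delta_2\Delta_4)/(\alpha\Delta_1\Delta_3)$. Your appeal to cyclic symmetry for the second coplanarity identity is legitimate (the support system of an odd-gon is unique up to sign by Theorem 2.1, and the triple product is odd of degree three, so the global sign is harmless), though the paper simply asserts the analogous computation for $(u_3-u_1,[u_4-u_1,u_5-u_1])$, gluing the two tetrahedra along $B_1B_3B_4$ rather than your $B_2B_3B_4$.
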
  \begin{proof}
Let $u_1,\ldots,u_5$ be a support system for $P$. We must prove that
$(u_2-u_1,[u_3-u_1,u_4-u_1])=0$ and $(u_3-u_1,[u_4-u_1,u_5-u_1])=0$. As
$[u_1,u_2]+[u_2,u_3]+[u_3,u_4]+[u_4,u_5]+[u_5,u_1]=0$, then
\begin{multline*} (u_2-u_1,[u_3-u_1,u_4-u_1])=(u_2,[u_3,u_4])-
(u_2,[u_3,u_1])-(u_2,[u_1,u_4])-(u_1,[u_3,u_4])=\\=(u_4,[u_2,u_3])+
(u_4,[u_1,u_2])-(u_1,[u_2,u_3])-(u_1,[u_3,u_4])=-(u_4,[u_5,u_1])+
(u_1,[u_4,u_5])=0.\end{multline*} The second equality can be proved
analogously. \pmn The second statement can be proved in the same way, as
the analogous statement in Theorem 3.1. \end{proof}
\begin{rem} If vectors $u_1,u_2,u_3,u_4,u_5$ have the same $z$-coordinate,
then $x$- and $y$-coordinates of the sum
$[u_1,u_2]+[u_2,u_3]+[u_3,u_4]+ [u_4,u_5]+[u_5,u_1]$ are zero.
\end{rem} \begin{rem} Up to rotation, dilation and mirror symmetry
each generic pentagon can be recovered from a pentagon in the
plane $z=1$, with oriented area zero and with centroid positioned
at positive $x$ half-axis. \end{rem} \begin{ex} Let
$$u_1=(2,2,1),\,\,u_2=(3,-1,1),\,\,u_3=(-3,1,1),\,\,u_4=(-4,0,1),\,\,
u_5=(-1,-1,1).$$ Endpoints $A,B,C,D,E$ of these vectors belong to
the plane $z=1$ and define at this plane the self-intersecting
pentagon with oriented area zero:
\[\begin{picture}(150,75)
\multiput(0,25)(10,0){14}{\line(1,0){6}}
\multiput(70,5)(0,10){7}{\line(0,1){6}} \put(145,16){\scriptsize
x} \put(63,75){\scriptsize y} \put(142,22){$\to$}
\put(68,72){$\uparrow$} \put(55,10){\line(1,1){45}}
\put(100,55){\line(1,-3){15}} \put(25,40){\line(3,-1){90}}
\put(10,25){\line(1,1){15}} \put(10,25){\line(3,-1){45}}
\put(104,55){\scriptsize A} \put(119,8){\scriptsize B}
\put(23,43){\scriptsize C} \put(7,16){\scriptsize D}
\put(53,2){\scriptsize E} \end{picture}\] Thus, vectors
$u_1,\ldots,u_5$ constitute as a support system:
\begin{gather*}v_2=[u_1,u_2]=(3,1,-8),\,v_3=[u_2,u_3]=(-2,-6,0),\,v_4=[u_3,u_4]=
(1,-1,4),\\v_5=[u_4,u_5]=(1,3,4),\,v_1=[u_5,u_1]=(-3,3,0).\end{gather*}
Now let us perform the inverse computation:
$$\Delta_1=192,\,\Delta_2=-128,\,\Delta_3=32,\,\Delta_4=48,\,
\Delta_5=-144$$ and
\begin{align*} &w_1=[v_1,v_2]=(-24,-24,-12)\\
&w_2=\dfrac{[v_2,v_3]}{\Delta_1}=\left(-\frac 14,\frac{1}{12},
-\frac{1}{12}\right)\\ &w_3=\dfrac{\Delta_1\cdot
[v_3,v_4]}{\Delta_2} =(36,-12,-12)\\ &w_4=\dfrac{\Delta_2\cdot
[v_4,v_5]}
{\Delta_1\cdot\Delta_3}=\left(\frac 13,0,-\frac{1}{12}\right)\\
&w_5=\dfrac{\Delta_1\cdot\Delta_3\cdot [v_4,v_5]}
{\Delta_2\cdot\Delta_4}=(12,12,-12)\end{align*}  As
$[w_5,w_1]=144\cdot v_1$, then $\alpha=12$ and we return to the initial
set $u_1,\ldots,u_5$. \end{ex}

\section{Hexagons}
\pn \begin{ex} Let vectors $v_1=(1,0,0)$, $v_2=(0,1,0)$, $v_3=(0,0,1)$,
$v_4=(2,-1,3)$, $v_5=(-1,5,2)$ and $v_6=(-2,-5,-6)$ define the hexagon $P$.
As
$$\Delta_1=1,\,\Delta_2=2,\,\Delta_3=9,\,\Delta_4=15,\,\Delta_5=20,\,
\Delta_6=-6$$ and
$\Delta_1\Delta_3\Delta_5=\Delta_2\Delta_4\Delta_6= -180$, then
$P$ is regular. Vectors
$$u_1=(0,0,1),\,u_2=(1,0,0),\,u_3=\left(\frac 12,1,0\right),\,
u_4=\left(-\frac{34}{9},-\frac{14}{9},2\right),\,
u_5=\left(-6,-3,\frac 92\right),\,u_6=\left(0,1,-\frac 56\right)$$
constitute a support system, and vectors
\begin{gather*}u_2-u_1=(1,0,-1),\,u_3-u_2=\left(-\frac
12,1,0\right),\,u_4-u_3=
\left(-\frac{77}{18},-\frac{-23}{9},2\right),\,u_5-u_4=
\left(-\frac{20}{9},-\frac{13}{9},\frac 52\right),\\u_6-u_5=
\left(6,4,-\frac{16}{3}\right),\,u_1-u_6=\left(0,-1,\frac{11}{6}
\right)\end{gather*} define the derived hexagon $P'$. The
corresponding determinants are:
$$\Delta'_1=-\frac{32}{9},\,\Delta'_2=8,\,\Delta'_3=\frac 43,\,
\Delta'_4=-\frac{32}{9},\,\Delta'_5=8,\,\Delta'_6=\frac 43.$$ We see that
$P'$ satisfies a stronger condition, than regularity. \end{ex}
\begin{defin} A regular hexagon is called \emph{strongly-regular}, if
$\Delta_1=\Delta_4$, $\Delta_2=\Delta_5$ and $\Delta_3=\Delta_6$.
The \emph{type} of a strongly-regular hexagon is the cyclic ratio
$\Delta_1:\Delta_2:\Delta_3$. \end{defin} \begin{theor} All
derivatives of a regular hexagon $P$ are strongly-regular and have
the same type.
\end{theor}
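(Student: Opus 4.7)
The plan is to expand $\Delta'_i=(w_i,[w_{i+1},w_{i+2}])$, where $w_j=u_{j+1}-u_j$, by multilinearity of the scalar triple product. Of the eight resulting terms, four vanish through a repeated argument, leaving
$$\Delta'_i=(u_{i+1},[u_{i+2},u_{i+3}])-(u_i,[u_{i+2},u_{i+3}])+(u_i,[u_{i+1},u_{i+3}])-(u_i,[u_{i+1},u_{i+2}]).$$
The key observation I would exploit throughout is that the support-system identity $[u_j,u_{j+1}]=v_{j+1}$ turns every triple product with two consecutive indices into a dot product of the form $(u_a,v_b)$, and that these $v$'s satisfy the orthogonalities $(u_j,v_j)=(u_j,v_{j+1})=0$ since $v_{j+1}$ is, by construction, perpendicular to both $u_j$ and $u_{j+1}$.

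For strong regularity I would compute $\Delta'_i-\Delta'_{i+3}$ in this scalar-product form. Using the closing condition $v_1+\cdots+v_6=0$ to replace $v_{i+3}+v_{i+4}+v_{i+5}$ by $-(v_i+v_{i+1}+v_{i+2})$, the ``$\Delta'_i$'' half and the ``$\Delta'_{i+3}$'' half merge; the orthogonalities then cancel all terms of the form $(u_j,v_j)$ or $(u_j,v_{j+1})$, and the difference collapses to a single expression of shape $(u_a,v_a+v_{a+1})=0$. Doing this for $i=1,2,3$ gives strong regularity of every derivative.

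For the ``same type'' claim I would invoke Theorem 2.1: every support system of $P$ has the form $u'_j=\alpha u_j$ for $j$ even and $u'_j=u_j/\alpha$ for $j$ odd, with $\alpha\neq 0$. Substituting into the expansion of $\Delta'_i$ and tracking the exponent of $\alpha$, all terms of total $\alpha$-degree $\pm 3$ drop out through repeated columns and only $\alpha^{\pm 1}$ terms survive, so
$$\Delta'_i(\alpha)=\alpha^{\epsilon_i}X_i-\alpha^{-\epsilon_i}Y_i,\qquad \epsilon_i=(-1)^{i+1},$$
where each coefficient $X_i,Y_i$ is a sum of two triple products $(u_j,[u_k,u_l])$. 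The cyclic ratio $\Delta'_1(\alpha):\Delta'_2(\alpha):\Delta'_3(\alpha)$ is then independent of $\alpha$ if and only if the quadratic relations $X_1X_2=Y_1Y_2$ and $X_1Y_3=X_3Y_1$ hold.

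The main obstacle I foresee is verifying these two quadratic identities. My approach would be to apply the Pl\"ucker relation
$$(b,[c,d])\,a-(a,[c,d])\,b+(a,[b,d])\,c-(a,[b,c])\,d=0,$$
valid for any four vectors $a,b,c,d\in\mathbb{R}^3$, to appropriate 4-tuples among $u_1,\ldots,u_6$, and then take the scalar product with various $v_\ell=[u_{\ell-1},u_\ell]$. This converts each quadratic identity in the $(u_j,[u_k,u_l])$'s into a \emph{linear} combination of them, which after one further application of the support-system identity becomes a linear combination of $(u_a,v_b)$'s. The same closing condition $\sum v_i=0$ and the orthogonalities $(u_j,v_j)=(u_j,v_{j+1})=0$ that handled strong regularity then force the combination to vanish. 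In short, the entire theorem is powered by exactly two elementary facts about the support system; the only real work lies in the bookkeeping of which Pl\"ucker substitution to invoke at each step.
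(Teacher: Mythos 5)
Your reduction is the same as the paper's: the expansion of $\Delta'_i$ into four triple products, the observation that only the $\alpha^{\pm1}$ terms survive, and the identification of the two quadratic conditions ($X_1X_2=Y_1Y_2$ and $X_1Y_3=X_3Y_1$) that make the cyclic ratio independent of $\alpha$ are all correct and match the paper. The strong-regularity half also goes through essentially as you describe: writing $\Delta'_i-\Delta'_{i+3}$ in terms of $(u_a,v_b)$'s, using $\sum v_i=0$ and $(u_j,v_j)=(u_j,v_{j+1})=0$, one is left with $(u_2,v_4)-(u_4,v_3)+(u_1,v_6)-(u_5,v_1)$, which vanishes by cyclic invariance of the triple product (not quite the single term $(u_a,v_a+v_{a+1})$ you predict, but the same toolkit); since any support system satisfies the same identities, this covers all derivatives at once, which is if anything cleaner than the paper's coefficient-by-coefficient comparison.

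The gap is in the ``same type'' half, which is the hard core of the theorem. You assert that dotting the Pl\"ucker relation with $v_\ell=[u_{\ell-1},u_\ell]$ ``converts each quadratic identity in the $(u_j,[u_k,u_l])$'s into a linear combination of them,'' after which the closing condition and the orthogonalities finish the job. That step fails as described. Dotting the Pl\"ucker vector identity with $v_\ell$ produces another \emph{quadratic} relation among triple products (each term $(x,v_\ell)$ is itself a triple product), namely an ordinary Grassmann--Pl\"ucker relation for the matrix of the $u_i$'s; it never becomes linear unless two of the four vectors are $u_{\ell-1},u_\ell$, in which case the relation degenerates to a tautology. More importantly, the identity $X_1X_2=Y_1Y_2$, i.e.
$$(D_{234}+D_{124})(D_{345}+D_{235})=(D_{123}+D_{134})(D_{245}+D_{234}),\qquad D_{ijk}:=(u_i,[u_j,u_k]),$$
is \emph{not} a consequence of Pl\"ucker relations and orthogonality alone: a direct numerical check (e.g. $u_1=e_1$, $u_2=e_2$, $u_3=e_3$, $u_4=(1,1,0)$, $u_5=(0,1,1)$) gives $X_1X_2-Y_1Y_2=1\neq0$. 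The closing condition $\sum_i[u_i,u_{i+1}]=0$ must enter, and it enters quadratically: one needs $X_1X_2-Y_1Y_2$ to lie in the ideal generated by the Pl\"ucker relations together with the (linear, in the $D_{ijk}$) relations $\sum_iD_{j,i,i+1}=0$, and exhibiting the cubic multiplier of the closing constraint is precisely the nontrivial computation. The paper does this by elimination and resultants (its relation $\Delta_1(\Delta_3+\Delta_{245})(\Delta_4+\Delta_{356})=\Delta_4(\Delta_1+\Delta_{124})(\Delta_2+\Delta_{235})$, factored and reduced modulo the polynomial $r$ obtained from (3)). Your plan replaces this with linear bookkeeping that cannot suffice, so the two quadratic identities --- and with them the ``same type'' claim --- remain unproven.
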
 \begin{proof} We use notations from Section 2. A
derived hexagon $P'$ is defined by vectors
$u'_2-u'_1,u'_3-u'_2,\ldots,u'_6-u'_5, u'_1-u'_6$, where
$u'_i=\alpha\cdot u_i$ for even $i$, and $u'_i=u_i/\alpha$ for odd
$i$. We must prove that $\Delta'_1=\Delta'_4$,
$\Delta'_2=\Delta'_5$ and $\Delta'_3=\Delta'_6$ (determinants
$\Delta'_i$ are defined in the same way, as determinants
$\Delta_i$, but for polygon $P'$). We will prove the first
equality. It can be rewritten in the form
$$(\alpha\cdot u_2-u_1/\alpha,[u_3/\alpha-\alpha\cdot u_2,
\alpha\cdot u_4-u_3/\alpha])= (u_5/\alpha-\alpha\cdot
u_4,[\alpha\cdot u_6-u_5/\alpha, u_1/\alpha-\alpha\cdot u_6]).$$
Here coefficients at $\alpha^3$ and $\alpha^{-3}$ are zero. Let
compare coefficients at $\alpha$ and at $\alpha^{-1}$ in the left
and in the righthand sides of this relation. Coefficient at
$\alpha^{-1}$ in the left is
$$-(u_1,[u_3,u_4])-(u_1,[u_2,u_3])=-(u_1,v_4)-(u_1,v_3)=
-([v_1,v_2],v_4)-([v_1,v_2],v_3)=-([v_1,v_2],v_4)-\Delta_1.$$
Coefficient at $\alpha^{-1}$ in the right is
\begin{multline*} (u_5,[u_6,u_1])+(u_4,[u_5,u_1])=(u_1,[u_5,u_6])+
(u_1,[u_4,u_5])=(u_1,v_6)+(u_1,v_5)=\\=([v_1,v_2],v_6)+([v_1,v_2],v_5)=
\Delta_6-([v_1,v_2],v_3+v_4+v_6)=-([v_1,v_2],v_4)-\Delta_1.\end{multline*}
Then, coefficient at $\alpha$ in the left is
\begin{multline*} (u_2,[u_3,u_4])+(u_1,[u_2,u_4])=(u_2,v_4)+(u_4,[u_1,u_2])=
(u_2,v_4)+(u_4,v_2)=\\=\dfrac{1}{\Delta_1}\cdot([v_2,v_3],v_4)+
\dfrac{\Delta_2}{\Delta_1\Delta_3}\cdot([v_4,v_5],v_2)=
\dfrac{\Delta_2}{\Delta_1}-\dfrac{\Delta_2}{\Delta_1\Delta_3}\cdot
([v_4,v_5],v_1+v_3+v_6)=\\=-\dfrac{\Delta_2}{\Delta_1\Delta_3}
([v_4,v_5],v_1)-\dfrac{\Delta_2\Delta_4}{\Delta_1\Delta_3}\,.\end{multline*}
Coefficient at $\alpha$ in the right is
\begin{multline*}-(u_4,[u_6,u_1])-(u_4,[u_5,u_6])=
-(u_4,v_1)-(u_4,v_6)=
-\dfrac{\Delta_2}{\Delta_1\Delta_3}\cdot([v_4,v_5],v_1)-\\-
\dfrac{\Delta_2}{\Delta_1\Delta_3}\cdot([v_4,v_5],v_6)=
-\dfrac{\Delta_2}{\Delta_1\Delta_3}\cdot([v_4,v_5],v_1)-
\dfrac{\Delta_2\Delta_4}{\Delta_1\Delta_3}\,. \end{multline*}
Conditions $\Delta'_2=\Delta'_5$ and $\Delta'_3=\Delta'_6$ can be proved
analogously. \pmn Now let us turn to the second part of the theorem. We
need to prove that ratios
$$\frac{(\alpha\cdot u_2-u_1/\alpha,[u_3/\alpha-\alpha\cdot u_2,
\alpha\cdot u_4-u_3/\alpha])} {(u_3/\alpha-\alpha\cdot
u_2,[\alpha\cdot u_4-u_3/\alpha, u_5/\alpha-\alpha\cdot u_4])}\,
\text{ and }\, \frac{(u_3/\alpha-\alpha\cdot u_2,[\alpha\cdot
u_4-u_3/\alpha, u_5/\alpha-\alpha\cdot u_4])} {(\alpha\cdot
u_4-u_3/\alpha,[u_5/\alpha-\alpha\cdot u_4, \alpha\cdot
u_6-u_5/\alpha])}$$ are constants, as functions of $\alpha$. We
will prove it for the first ratio, which can be rewritten in the
following way:
$$\dfrac{\alpha\cdot[(u_2,v_4)+(u_4,v_2)]-
\frac 1\alpha\cdot[(u_1,v_4)+(u_1,v_3)]} {\frac
1\alpha\cdot[(u_3,v_5)+(u_5,v_3)]-\alpha\cdot[(u_2,v_5)+(u_2,v_4)]}.$$
It is enough to prove that the ratio of coefficients at $\alpha$
and the ratio of coefficients at $\alpha^{-1}$ are the same, i.e.
to prove that
\begin{multline*}\frac{(u_2,v_4)+(u_4,v_2)}{(u_2,v_5)+(u_2,v_4)}=
\frac{(u_1,v_3)+(u_1,v_4)}{(u_3,v_5)+(u_5,v_3)} \Leftrightarrow \\
\Leftrightarrow
\dfrac{\dfrac{\Delta_2}{\Delta_1}+\dfrac{\Delta_2}{\Delta_1\Delta_3}
\cdot(v_2,[v_4,v_5])}{\dfrac{1}{\Delta_1}\cdot (v_2,[v_3,v_5])+
\dfrac{\Delta_2}{\Delta_1}}=\dfrac{\Delta_1+(v_1,[v_2,v_4])}
{\dfrac{\Delta_1\Delta_3}{\Delta_2}+\dfrac{\Delta_1\Delta_3}
{\Delta_2\Delta_4}\cdot (v_3,[v_5,v_6])} \Leftrightarrow\\
\Leftrightarrow
\dfrac{\Delta_3+(v_2,[v_4,v_5])}{\Delta_2+(v_2,[v_3,v_5])}=
\dfrac{\Delta_4\cdot(\Delta_1+(v_1,[v_2,v_4])}{\Delta_1\cdot(\Delta_4+
(v_3,[v_5,v_6])}\end{multline*}  Let
$$(a_1,a_2,a_3),(b_1,b_2,b_3),(c_1,c_2,c_3),(d_1,d_2,d_3),(e_1,e_2,e_3),
(f_1,f_2,f_3)$$ be coordinates of vectors
$u_1,u_2,u_3,u_4,u_5,u_6$, respectively. And let $M$ be the matrix
of coordinates of vectors $v_1,\ldots,v_6$:
$$M=\begin{pmatrix} f_2a_3-f_3a_2&f_3a_1-f_1a_3&f_1a_2-f_2a_1\\
a_2b_3-a_3b_2&a_3b_1-a_1b_3&a_1b_2-a_2b_1\\ b_2c_3-b_3c_2&
b_3c_1-b_1c_3&b_1c_2-b_2c_1\\ c_2d_3-c_3d_2&c_3d_1-c_1d_3&
c_1d_2-c_2d_1\\ d_2e_3-d_3e_2&d_3e_1-d_1e_3&d_1e_2-d_2e_1\\
e_2f_3-e_3f_2&e_3f_1-e_1f_3&e_1f_2-e_2f_1\end{pmatrix}$$ Let us
denote by $\Delta_{ijk}$ the third order determinant of the
submatrix, composed of rows of $M$ with numbers $i$, $j$ and $k$.
Thus, in these notation, $\Delta_1$ and $\Delta_{123}$ are the
same, $\Delta_2$ and $\Delta_{234}$ are the same, and so on. The
equality, we need to prove, can be rewritten, as:
$$\Delta_1\cdot(\Delta_3+\Delta_{245})\cdot(\Delta_4+\Delta_{356})=
\Delta_4\cdot(\Delta_1+\Delta_{124})\cdot(\Delta_2+\Delta_{235}).$$
Each factor in this equality, as a polynomial in variables
$a_i,b_i,\ldots,f_i$, is a product of two irreducible polynomials.
Thus, we have the product of six polynomials in the lefthand side
of the equality, and the product of six polynomials in the
righthand side. However, four of them can be cancelled. Now, the
lefthand side is the product of two polynomials $p_1p_2$, where
$p_1$ depends only on $a_i,b_i,c_i,d_i$ and $p_2$ --- on
$b_i,c_i,d_i,e_i$. The righthand side is the product of two
polynomials $q_1q_2$ with the same properties. \pmn An easy
computation demonstrates, that for any choice of vectors
$u_1,\ldots,u_6$ the equality $\Delta_1\Delta_3\Delta_5=
\Delta_2\Delta_4\Delta_6$ is automatically satisfied. But these
vectors constitute a support system only if the sum of matrix $M$
rows is zero, i.e. if the following conditions are satisfied
$$\begin{array}{c}
f_2a_3-f_3a_2+a_2b_3-a_3b_2+b_2c_3-b_3c_2+c_2d_3-c_3d_2+d_2e_3-d_3e_2+
e_2f_3-e_3f_2=0\\
f_3a_1-f_1a_3+a_3b_1-a_1b_3+b_3c_1-b_1c_3+c_3d_1-c_1d_3+d_3e_1-d_1e_3+
e_3f_1-e_1f_3=0\\
f_1a_2-f_2a_1+a_1b_2-a_2b_1+b_1c_2-b_2c_1+c_1d_2-c_2d_1+d_1e_2-d_2e_1+
e_1f_2-e_2f_1=0\end{array}\eqno(3)$$  If we eliminate from these
three relations variables $f_3$ and $f_2$, then variable $f_1$
will be eliminated also and we will obtain the equation $r=0$,
where $r$ is a polynomial that depends on variables
$a_i,b_i,c_i,d_i,e_i$.\pmn Let us transform the relation
$p_1p_2=q_1q_2$ into relation $\tilde{p}_1p_2=\tilde{q_1}q_2$,
where
$$\tilde{p}_1=\text{resultant}(p_1,r,a_3),\,\tilde{q}_1=
\text{resultant}(q_1,r,a_3).$$ Polynomials $\tilde{p}_1$ and
$\tilde{q}_1$ each are products of two irreducible polynomials. In
result, three factors in the lefthand side are cancelled with
three factors in the righthand side.\end{proof}
\begin{theor} Let $P$ be a regular hexagon, $P'$ be its derivative and
$P''$ be a derivative of $P'$. Then types of $P'$ and $P''$ are the same.
\end{theor}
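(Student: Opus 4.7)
The plan is to reduce the theorem to a computation modeled precisely on the proof of Theorem 5.1. First I observe that, since $P$ is regular, Theorem 5.1 guarantees $P'$ is strongly-regular and hence regular; so $P''$ exists, is itself strongly-regular, and has a type $\Delta''_1:\Delta''_2:\Delta''_3$ that is independent of the scaling parameter $\alpha'$ used to construct the support system of $P'$. This independence, which is the second half of Theorem 5.1 applied to $P'$, lets me set $\alpha'=1$ throughout and treat the type of $P''$ as a well-defined invariant of $P'$.

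Next I would build the support system of $P'$ via the explicit recipe from the proof of Theorem 2.1, using the primed edge vectors $v'_i=u'_{i+1}-u'_i$ and determinants $\Delta'_j$: put $u''_1=[v'_1,v'_2]$, $u''_2=[v'_2,v'_3]/\Delta'_1$, $u''_3=\Delta'_1\,[v'_3,v'_4]/\Delta'_2$, and so on cyclically. The edges of $P''$ are $v''_i=u''_{i+1}-u''_i$, and the theorem reduces to the two ratio equalities
$$\Delta''_1/\Delta''_2=\Delta'_1/\Delta'_2,\qquad \Delta''_1/\Delta''_3=\Delta'_1/\Delta'_3.$$
Following the template in the proof of Theorem 5.1, each $\Delta''_i=(v''_i,[v''_{i+1},v''_{i+2}])$ expands as a combination of the $\Delta'_j$'s and auxiliary triple products $(v'_a,[v'_b,v'_c])$ on non-consecutive indices, weighted by ratios of the $\Delta'_j$'s.

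The main obstacle is managing the bulk of the resulting symbolic algebra. My strategy mirrors the final step of Theorem 5.1's proof: after substituting $v'_j=u'_{j+1}-u'_j$ and reducing products of the $u'_j$'s via $[u'_a,u'_{a+1}]=v_{a+1}$ together with the strong-regularity relations $\Delta'_1=\Delta'_4$, $\Delta'_2=\Delta'_5$, $\Delta'_3=\Delta'_6$ enjoyed by $P'$, each side of the claimed ratio equality should factor as a product of two irreducible polynomials supported on disjoint blocks of the coordinates of $u_1,\ldots,u_6$. Cancelling the matched factors and eliminating the remaining ones by a resultant against the closure polynomial arising from $v_1+\ldots+v_6=0$ (exactly as in the last paragraph of Theorem 5.1's proof) should collapse both ratios to the same rational expression. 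The genuine risk is that the second iteration produces non-cancelling cross-terms after the reduction; if so, the fallback is a direct symbolic verification on a generic parametrization of strongly-regular hexagons, whose parameter space (once the constraints $\Delta_i=\Delta_{i+3}$ and $\sum v_i=0$ are imposed) is small enough to make the check computationally tractable.
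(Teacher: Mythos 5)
Your overall strategy (build the support system of $P'$ by the explicit recipe of Theorem 2.1, expand each $\Delta''_i$ as a combination of $\Delta'_j$'s and auxiliary triple products $\Delta'_{abc}=(v'_a,[v'_b,v'_c])$, then cancel using the closure of $P$) is the same as the paper's, but your reduction targets the wrong identities. You claim the theorem reduces to $\Delta''_1/\Delta''_2=\Delta'_1/\Delta'_2$ and $\Delta''_1/\Delta''_3=\Delta'_1/\Delta'_3$, i.e.\ to the equality of the \emph{ordered} ratios. What is actually true (and what the paper proves) is the cyclically shifted relation
$$\dfrac{\Delta''_1}{\Delta''_2}=\dfrac{\Delta'_2}{\Delta'_3},\qquad
\dfrac{\Delta''_2}{\Delta''_3}=\dfrac{\Delta'_3}{\Delta'_1},\qquad
\dfrac{\Delta''_3}{\Delta''_1}=\dfrac{\Delta'_1}{\Delta'_2},$$
so that $\Delta''_1:\Delta''_2:\Delta''_3=\Delta'_2:\Delta'_3:\Delta'_1$. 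This proves the theorem only because the type is defined as a \emph{cyclic} ratio; the unshifted equality you state would force all three $\Delta'_i$ to be equal and is false in general. Concretely, expanding with $w_1=[v'_1,v'_2]$, $w_2=[v'_2,v'_3]/\Delta'_1$, etc., one gets $\Delta''_1=\Delta'_2/\Delta'_1-\Delta'_{124}+\Delta'_2\Delta'_{245}/(\Delta'_1\Delta'_3)-\Delta'_1$, which pairs with $\Delta'_2/\Delta'_3$, not with $\Delta'_1/\Delta'_2$. Your computation, as planned, would therefore not close.

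A second, smaller issue: the proposal defers all of the decisive algebra ("the genuine risk is that the second iteration produces non-cancelling cross-terms") and offers a brute-force fallback, so even with the corrected target it is a plan rather than a proof. The step that actually makes the cancellation work in the paper is the pair of polynomial identities
$$\Delta'_2+\Delta'_3+\Delta'_{235}+\Delta'_{245}=0,\qquad 2\Delta'_1+\Delta'_{124}+\Delta'_{356}=0,$$
which are consequences of the closure relations (3) for $P$ (not of strong regularity of $P'$ alone --- Example 5.2 shows that strong regularity by itself does not preserve the type). You correctly sense that the closure polynomial must be invoked, but without isolating these two identities the factorization-and-cancellation step you describe has no concrete mechanism behind it.
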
 \begin{proof} Let vectors $u_1=(a_1,a_2,a_3),\ldots,u_6=(f_1,f_2,f_3)$
constitute a support system of hexagon $P$ and vectors
$w_1,\ldots,w_6$ constitute a support system of hexagon $P'$. We
will work with the matrix $M'$ of coordinates of vectors
$u_2-u_1,\ldots,u_1-u_6$ and with the matrix $M''$ of coordinates
of vectors $w_2-w_1,\ldots,w_1-w_6$. Let $\Delta'_i$ and
$\Delta''_i$ be corresponding determinants. Let us prove, that
$$\dfrac{\Delta''_1}{\Delta''_2}=\dfrac{\Delta'_2}{\Delta'_3},\quad
\dfrac{\Delta''_2}{\Delta''_3}=\dfrac{\Delta'_3}{\Delta'_1},\quad
\dfrac{\Delta''_3}{\Delta''_1}=\dfrac{\Delta'_1}{\Delta'_2}.$$ We
will prove the first equality. As
$$\Delta''_1/\Delta''_2=
\dfrac{(w_2-w_1,[w_3-w_2,w_4-w_3])}{(w_3-w_2,[w_4-w_3,w_5-w_4])}=
\dfrac{\Delta'_2/\Delta'_1-\Delta'_{124}+\Delta'_2\Delta'_{245}/
\Delta'_1\Delta'_3
-\Delta'_1}{\Delta'_1\Delta'_3/\Delta'_2-\Delta'_{235}/\Delta'_1+
\Delta'_1\Delta'_3\Delta'_{356}/\Delta'_2\Delta'_4-
\Delta'_2/\Delta'_1}\,,$$ then
\begin{multline*}
\Delta''_1/\Delta''_2=\Delta'_2/\Delta'_3 \Leftrightarrow\\
\Leftrightarrow\Delta'_2\Delta'_3/\Delta'_1-\Delta'_3\Delta'_{124}+
\Delta_2\Delta_{245}/\Delta_1-\Delta_1\Delta_3=
\Delta'_1\Delta'_3-\Delta'_2\Delta'_{235}/\Delta'_1+
\Delta'_3\Delta'_{356}-\left(\Delta'_2\right)^2/\Delta'_1.\end{multline*}
(We remind that $\Delta'_1=\Delta'_4$.) Now we separate summands
of the first degree in $\Delta'$ from summands of the second
degree in $\Delta'$ and will prove that
$$\begin{array}{l}\Delta'_2+\Delta'_3+\Delta'_{235}+\Delta'_{245}=0\\
2\Delta'_1+\Delta_{124}+\Delta'_{356}=0\end{array}\eqno(4)$$ Let
us remind that variables $a_i,b_i,\ldots,f_i$ satisfy conditions
(3). \pmn The lefthand side of the first equation in (4) is a
polynomial $g$ of degree 3 in variables $b_i,c_i,d_i,e_i,f_i$ and
this polynoimal is exactly the result of the elimination of
variables $a_i$ from relations (3). \pmn The lefthand side of the
second equation in (4) is a polynomial $h$ in all variables
$a_i,\ldots,f_i$. If we eliminate from $h$ variables $a_i$, using
relations (3), then we will obtain $g$. \end{proof}
\begin{ex} If a hexagon $P$ is strongly-regular and $P'$ is the
derived hexagon, then they can be of different types. Here is the
example: let $P$ be defined by vectors
$$v_1=(1,0,0),\,v_2=(0,1,0),\,v_3=(0,0,1),\,v_4=(2,-1,3),\,
v_5=\Bigl(-\frac 32,-\frac 12,-\frac 32\Bigr),\,v_6=\Bigl(-\frac
32, \frac 12,-\frac 52\Bigr).$$ Then $\Delta_1=1$,
$\Delta_2=2$, $\Delta_3=-\frac 52$, $\Delta_4=1$, $\Delta_5=2$,
$\Delta_6=-\frac 52$, i.e. $P$ is strongly-regular. Then
$$u_1=(0,0,1),\,u_2=(1,0,0),\,u_3=\Bigl(\frac 12,1,0\Bigr),\, u_4=
\Bigl(-\frac{12}{5},\frac 65,2\Bigr),\,u_5=\Bigl(-\frac
52,\frac{15}{8}, \frac{15}{8}\Bigr),\,u_6=\Bigl(0,1,\frac
15\Bigr).$$ The derived hexagon is defined by vectors
$$\begin{array}{lll} u_2-u_1=(1,0,-1)&u_3-u_2=
\left(-\dfrac 12,1,0\right)&u_4-u_3=\left(-\dfrac{29}{10},\dfrac
15,2\right)\\
\\u_5-u_4=\left(-\dfrac{1}{10},\dfrac{27}{40},-\dfrac 18\right)&
u_6-u_5=\left(\dfrac 52,-\dfrac 78,-\dfrac{67}{40}\right)&
u_1-u_6=\left(0,-1,\dfrac 45\right)\end{array}$$ The corresponding
determinants are as follows.
$$\Delta'_1=-\dfrac 45,\,\, \Delta'_2=\dfrac 18,\,\,
\Delta'_3=\dfrac{3}{10},\,\, \Delta'_4=-\dfrac 45,\,\,
\Delta'_5=\dfrac 18,\,\, \Delta'_6=\dfrac{3}{10}\,.$$ We see that
types of $P$ and $P'$ are different. The reason here is that $P$
is not a derivative of some regular hexagon, because
$\sum_{0<i<j<6}[v_i,v_j]\neq 0$ (see Remark 2.2).
\end{ex}

\section{The geometry of a derived hexagon}
\pn Vectors $u_1=(a_1,a_2,a_3),\ldots,u_6=(f_1,f_2,f_3)$ constitute
a support system, if conditions (3) are satisfied. The elimination
of variables $f_1,f_2,f_3$ from these conditions gives us the following
relation:
\begin{multline*}
(a_1b_2c_3-a_1b_3c_2+a_2b_3c_1-a_2b_1c_3+a_3b_1c_2-a_3b_2c_1)-\\
-(a_1b_2e_3-a_1b_3e_2-a_2b_1e_3+a_2b_3e_1+a_3b_1e_2-a_3b_2e_1)+\\
+(a_1c_2d_3-a_1c_3d_2+a_2c_3d_1-a_2c_1d_3+a_3c_1d_2-a_3c_2d_1)+\\
+(a_1d_2e_3-a_1d_3e_2+a_2d_3e_1-a_2d_1e_3+a_3d_1e_2-a_3d_2e_1)-\\
-(b_1c_2e_3-b_1c_3e_2+b_2c_3e_1-b_2c_1e_3+b_3c_1e_2-b_3c_2e_1)-\\
-(c_1d_2e_3-c_1d_3e_2+c_2d_3e_1-c_2d_1e_3+c_3d_1e_2-c_3d_2e_1)=0
\end{multline*}  The lefthand side of this relation is the, multiplied by $6$,
oriented volume of the following polyhedron $Q$: let
$B_1,B_2,B_3,B_4,B_5$ be endpoints of vectors
$u_1,u_2,u_3,u_4,u_5$, respectively. Then $Q$ is the result of
pasting together tetrahedrons $B_1B_3B_5B_2$ and $B_1B_3B_5B_4$
along the face $B_1B_3B_5$. But the volume of $Q$ is zero, hence,
vertices $B_2$ and $B_4$ are in one half-space with respect to the
plane $B_1B_3B_5$ and at the same distance from it. \pmn If we
start our numeration with the vector $u_3$, then in the same way
we will have that vertices $B_4$ and $B_6$ are in one half-space
with respect to $B_1B_3B_5$ and at the same distance from it.
Thus, we have the following configuration of points
$B_1,\ldots,B_6$: points $B_1,B_3,B_5$ belong to plane a $\Pi_1$
and points $B_2,B_4,B_6$ belong to a parallel plane $\Pi_2$. We
can assume that these planes are parallel to the plane $xy$. Let
$B'_2$, $B'_4$ and $B'_6$ be projections of points $B_2$, $B_4$
and $B_6$ to plane $\Pi_1$, then the third coordinate of the sum
$$[u_1,u_2]+[u_2,u_3]+[u_3,u_4]+[u_4,u_5]+[u_5,u_6]+[u_6,u_1]$$
(i.e. zero) is the oriented area of the plane hexagon
$B_1B'_2B_3B'_4B_5B'_6$. \pmn Thus, up to rotation and dilation, we have
the following construction of a regular hexagon: we draw a zero-area
hexagon $B_1B_2B_3B_4B_5B_6$ at the plane $xy$. Then vertices with
even numbers we lift to the plane $z=1$. Then we choose a point $O$ at
the $z$-axis: vectors $\overline{OB_1},\ldots,\overline{OB_6}$  are
vectors $u_1,\ldots,u_6$. It must be noted that in terms of this
construction $x$- and $y$-coordinates of the sum $[u_1,u_2]+\ldots+
[u_6,u_1]$ are zero automatically.

\section{Knots}
\pn \begin{prop} A pentagon in space cannot be knotted.\end{prop}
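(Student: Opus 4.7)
My plan is to find an \emph{empty ear} of the pentagon --- three consecutive vertices $A_{i-1}, A_i, A_{i+1}$ whose filled triangle $T$ meets the rest of the pentagon only at the two shared vertices $A_{i-1}$ and $A_{i+1}$. Given such a $T$, an ambient isotopy can sweep $A_i$ across the interior of $T$, replacing the two edges $A_{i-1}A_i$ and $A_iA_{i+1}$ by the single diagonal $A_{i-1}A_{i+1}$ and reducing the pentagon to a spatial quadrilateral $Q$. Any spatial quadrilateral $Q$ is unknotted: either diagonal splits $Q$ into two triangles which, for generic $Q$, lie in distinct planes and meet only along that diagonal, so their union is an embedded disk bounded by $Q$.

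To produce the empty ear I would examine the convex hull $H$ of $A_1, \ldots, A_5$; by genericity $H$ is a simplicial $3$-polytope. If all five vertices lie on $H$, Euler's formula yields $F = 2V - 4 = 6$ triangular faces. Of the $\binom{5}{3} = 10$ triples of vertices, exactly $5$ are cyclically consecutive and the other $5$ are not, so by pigeonhole at least one face of $H$ is a consecutive triple. If instead one vertex $A_j$ lies strictly inside the tetrahedron $H$ spanned by the other four, then the two consecutive triples avoiding $A_j$ are $3$-subsets of the tetrahedron's vertex set and are therefore both faces of $H$.

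In either case one obtains a consecutive triple $T = A_{i-1}A_iA_{i+1}$ realized as a face of $H$, and then the remaining two pentagon vertices lie strictly on one side of the plane of $T$. The three pentagon edges not incident to $A_i$ each either connect two far-side vertices or join a far-side vertex to an endpoint of $T$; in both cases they remain in the closed far half-space and touch the plane only at the shared endpoints, so none of them meets the interior of $T$. The main obstacle is precisely this combinatorial argument for the empty ear; once it is in hand, the triangle move and the construction of a spanning disk for the resulting quadrilateral are routine.
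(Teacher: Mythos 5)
Your proof is correct, but it follows a genuinely different route from the paper's. The paper's argument is a short diagram-based sketch: it assumes the pentagon is knotted as a trefoil in a specific projected configuration, places $A$, $B$, $C$ in the plane $xy$ with $D$ above and $E$ below, and observes that the segment $EA$ then cannot pass above $CD$; this is quick and visual but leans on the unstated claims that a knotted pentagon would have to be a trefoil and that its diagram can be normalized as drawn. You instead prove the general reduction: the convex-hull case analysis (six triangular faces versus only five non-consecutive triples when all five vertices are extreme, and the tetrahedron case otherwise) cleanly produces a consecutive triple that is a hull face, the half-space argument shows the corresponding ear is empty, a triangle move reduces the pentagon to a quadrilateral, and the two-triangle spanning disk shows any quadrilateral is unknotted. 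Your version is longer but complete and makes no assumption about the knot type or its diagram; it is essentially the standard proof that the stick number of any nontrivial knot is at least six, and the only ingredients you take as known (that a move across an empty triangle is an ambient isotopy, and that a curve bounding an embedded disk is unknotted) are standard facts of PL knot theory. One small point worth stating explicitly: the paper's genericity hypothesis (no three consecutive edges coplanar) already guarantees that every four of the five vertices are affinely independent, which is exactly what your convex-hull argument needs to ensure $H$ is simplicial and the interior point, if any, lies in the open tetrahedron.
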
\pmn
\emph{Sketch of proof.} Let a pentagon $ABCDE$ be knotted as a trefoil:
\[\begin{picture}(110,110) \put(10,10){\line(1,0){90}}
\qbezier(10,10)(40,40)(67,67) \qbezier(73,73)(85,85)(100,100)
\qbezier(55,25)(55,40)(55,50) \qbezier(55,59)(55,70)(55,100)
\qbezier(55,100)(70,70)(74,62) \qbezier(78,56)(85,40)(100,10)
\put(55,25){\line(3,5){45}} \put(103,97){\small A}
\put(1,6){\small B} \put(103,6){\small C} \put(45,96){\small D}
\put(59,19){\small E} \end{picture}\] Let vertices $A$, $B$ and $C$ belong
to the plane $xy$. Then the point $D$ is above this plane and the point
$E$ --- below. But then all segment $[EA]$ is below $xy$ and cannot pass
above the segment $CD$. \qed \pmn A hexagon in space can be knotted,
however:
\begin{prop} A knotted hexagon is not regular. \end{prop}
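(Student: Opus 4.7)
The goal is to prove the contrapositive: every regular hexagon $P$ is unknotted. By Section~5, up to rotation and dilation a regular hexagon is determined by a planar zero-area hexagon $B_1B_2\ldots B_6$ in the $xy$-plane (whose even-indexed vertices are subsequently lifted to $z=1$) together with the position of an apex $O$ on the $z$-axis. The plan is to exploit this explicit parameterization in a deformation argument.

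First I would verify that the moduli space $\mathcal{P}$ of admissible data is connected: the zero-area planar hexagons form a connected hypersurface in $\mathbb{R}^{12}$ cut out by a single polynomial equation, and the parameter for $O$ contributes a line. Second, I would identify an explicit member of $\mathcal{P}$ whose polygon $P$ is manifestly unknotted; for instance, the vertices of $P$ computed from the data of Example~5.1 are simple enough that one can verify unknottedness by direct inspection of a generic planar projection. Third, $P$ depends continuously on the point of $\mathcal{P}$, and its knot type is locally constant on $\mathcal{P}\setminus\Sigma$, where $\Sigma$ is the self-intersection locus. Connecting any given regular $P$ to the verified unknotted example through $\mathcal{P}\setminus\Sigma$ would then finish the proof.

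The main obstacle is to show that $\Sigma$ does not separate $\mathcal{P}$. Since $\Sigma$ is cut out by polynomial conditions (two non-adjacent edges of $P$ meet iff a specific algebraic relation on the $B_i$ and the height of $O$ holds), it has generic codimension one, so the twelve-dimensional $\mathcal{P}$ ought to remain connected after its removal. Concretely, given a regular $P$ whose deformation threatens to produce a self-intersection between edges $e_i$ and $e_j$, one would perturb the height of $O$, which rescales the vertical component of every $u_k$ simultaneously while preserving regularity; this changes the cross products $[u_{k-1},u_k]$ non-trivially and can be shown to move $e_i$ off $e_j$ without introducing new intersections elsewhere.

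An alternative, perhaps cleaner route is to argue directly about planar projections. Using the formulas of Section~5 one can express the vertices of $P$ (as partial sums of the edges $v_i=[u_{i-1},u_i]$) explicitly in terms of the $B_i$ and of $O$, and try to show that the $xy$-projection of $P$ has either at most two crossings or three crossings whose over/under pattern around the polygon is not alternating. Since the trefoil --- the only non-trivial knot realizable by a six-stick polygon --- admits only alternating three-crossing diagrams, $P$ would be forced to be unknotted. The technical difficulty of this second approach lies in extracting the alternation pattern from the $B_i$-data uniformly over all shapes of the zero-area planar hexagon.
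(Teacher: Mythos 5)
Your strategy (prove the contrapositive by a deformation argument in the moduli space of Section~5) is genuinely different from the paper's, but it has a fatal gap at exactly the point you flag as ``the main obstacle.'' The assertion that $\Sigma$ ``has generic codimension one, so the twelve-dimensional $\mathcal{P}$ ought to remain connected after its removal'' is backwards: a real algebraic hypersurface of codimension one is precisely the kind of set that \emph{does} separate a connected space (already $\mathbb{R}^n\setminus\{x_1=0\}$ is disconnected), and in knot theory the discriminant of self-intersecting configurations is exactly what walls off the distinct knot types from one another. If its complement in the space of hexagons were automatically connected, every hexagon would be unknotted and the proposition would be vacuous. To guarantee connectivity of the complement you would need $\Sigma$ to have codimension at least two, which it does not; so you are left having to prove that every component of $\mathcal{P}\setminus\Sigma$ consists of unknots, which is essentially the original problem. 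The supporting heuristic --- perturbing the height of $O$ to ``move $e_i$ off $e_j$ without introducing new intersections elsewhere'' --- is not an argument, and your alternative route via crossing patterns of the $xy$-projection is also left unexecuted (note too that a generic projection of a hexagon can have up to nine crossings, not three, so the alternation analysis would have to handle non-minimal diagrams).

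The paper's own proof goes in the opposite, much more elementary direction: it starts from a hexagon $ABCDEF$ knotted as a trefoil (the only knot realizable with six sticks), places $A,B,C$ in the plane $xy$, and reads off the signs of the six determinants $\Delta_1,\ldots,\Delta_6$ directly from the over/under data of the trefoil diagram, obtaining $\Delta_1\Delta_3\Delta_5>0$ and $\Delta_2\Delta_4\Delta_6<0$. This violates the regularity criterion $\Delta_1\Delta_3\Delta_5=\Delta_2\Delta_4\Delta_6$ of Theorem~2.1, so no deformation or moduli-space connectivity is needed at all. If you want to salvage your approach, the honest route is to abandon the connectivity claim and instead prove a sign statement of this kind directly from the two-plane structure of the support system; as written, the proposal does not constitute a proof.
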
 \pmn
\emph{Sketch of proof.} Let $ABCDEF$ be a hexagon knotted as a trefoil:
\[\begin{picture}(110,110) \put(10,10){\line(1,0){90}}
\qbezier(10,10)(40,40)(67,67) \qbezier(73,73)(85,85)(100,100)
\qbezier(55,25)(55,40)(55,50) \qbezier(55,59)(55,70)(55,100)
\qbezier(55,100)(70,70)(79,54) \qbezier(82,46)(85,40)(100,10)
\put(55,25){\line(1,1){45}} \put(100,70){\line(0,1){30}}
\put(103,97){\small A} \put(1,6){\small B} \put(103,6){\small C}
\put(45,96){\small D} \put(59,19){\small E} \put(103,67){\small F}
\end{picture}\] As above we will assume that points $A$, $B$ and $C$
belong to the plane $xy$. As $D$ is above $xy$, then the vector
$\overline{CD}$ is directed "up", thus, the triple
$\{\overline{AB},\overline{BC},\overline{CD}\}$ is right and
$\Delta_1>0$. \pmn The point $E$ is below $xy$, hence, from $E$'s
point of view, the rotation from the vector $\overline{BC}$ to the
vector $\overline{CD}$ is clockwise, i.e. the triple
$\{\overline{BC}, \overline{CD},\overline{DE}\}$ is left and
$\Delta_2<0$. \pmn As the point $F$ is above the plane $CDE$, then
the triple $\{\overline{CD}, \overline{DE},\overline{EF}\}$ is
right and $\Delta_3>0$. \pmn Points $D$ and $F$ are above $xy$ and
the point $E$ --- below, hence, from $A$'s point of view, the
rotation from the vector $\overline{DE}$ to the vector
$\overline{EF}$ is clockwise, i.e. the triple
$\{\overline{DE},\overline{EF},\overline{FA}\}$ is left and
$\Delta_4<0$. \pmn Analogously, the triple
$\{\overline{EF},\overline{FA},\overline{AB}\}$ is right and
$\Delta_5>0$. At last, the triple $\{\overline{FA},
\overline{AB},\overline{BC}\}$ is left and $\Delta_6<0$. \pmn We
have that $\Delta_1\Delta_3\Delta_5>0$ and
$\Delta_2\Delta_4\Delta_6<0$, i.e. the hexagon $ABCDEF$ is
irregular. \qed

\vspace{1cm}
\end{document}